\DeclareMathOperator{\Hom}{Hom}
\renewcommand{\ge}{\geqslant}
\renewcommand{\le}{\leqslant}
\newcommand{\C}{\mathbb{C}}
\newcommand{\N}{\mathbb{N}}
\newcommand{\R}{\mathbb{R}}
\newcommand{\Z}{\mathbb{Z}}
\newcommand{\rarr}{\rightarrow}
\DeclareMathOperator{\pr}{pr}
\newcommand{\lrimpl}{\Leftrightarrow}
\DeclareMathOperator{\hd}{hd}
\DeclareMathOperator{\soc}{soc}
\newcommand{\Mod}{\mathrm{mod}}
\DeclareMathOperator{\Stab}{Stab}
\DeclareMathOperator{\Ind}{Ind}
\newcommand{\GL}{\mathrm{GL}}
\newcommand{\SL}{\mathrm{SL}}
\newcommand{\betac}{\beta\check{\ }\,}
\newcommand{\gammac}{\gamma\check{\ }\,}
\begin{document}
%\swapnumbers
\theoremstyle{plain}
\numberwithin{subsection}{section}
\newtheorem{thm}{Theorem}[section]
\newtheorem{propn}[thm]{Proposition}
\newtheorem{cor}[thm]{Corollary}
\newtheorem{clm}[thm]{Claim}
\newtheorem{lem}[thm]{Lemma}
\newtheorem{conj}[thm]{Conjecture}
\theoremstyle{definition}
\newtheorem{defn}[thm]{Definition}
\newtheorem{rem}[thm]{Remark}
\newtheorem{eg}[thm]{Example}

\title{Some remarks on a result of Jensen 
and tilting modules for $\SL_3(k)$ and $q$-$\GL_3(k)$}
\author{Alison E. Parker}
\address{
Department of Pure Mathematics\\
University of Leeds \\
Leeds LS2 9JT\\
UK.}
\email{parker@maths.leeds.ac.uk}

% The main text

%\keywords{good filtration, global dimension, algebraic group, {S}chur algebra, general
%linear group, quasi-hereditary algebra}
%\subjclass{16G99, 20G05 and 20G10}

%\thanks{}

\begin{abstract}
This paper reviews a result of Jensen on characters of some tilting
modules for $\SL_3(k)$, where $k$ has characteristic at least five
and fills in some gaps in the proof of this result. 
We then apply the result to finding some decomposition numbers for
three part partitions for the symmetric group and the Hecke algebra.
We review what is known for characteristic two and three.
The quantum case is also considered: analogous results hold for the
mixed quantum group where $q$ is an $l$th root of unity with $l$ at
least three and thus also hold for the associated Hecke algebra.
\end{abstract}
\maketitle

\section*{Introduction}
In this paper, we look at the main result of Jensen
\cite{jensen}, the published version of the main result in Jensen's
PhD thesis, supervised by H. H. Andersen. 
When we were looking at the proof of this result more closely, we
realised that there were some unfortunate holes in proof of this
result.
This paper is an attempt to fill these holes and thus establish the
validity of his results. 
We also consider applications of the main
result to the symmetric group and show that analogous results hold for
the quantum group and the associated Hecke algebra. 

Our proof does not differ significantly from that in \cite{jensen}. In
fact, it uses the same techniques and principles and we do not claim
to have any new insights nor to offer anything other than a corrected
version of Jensen's proof, with further applications of the results.
Usually, the original author would offer corrections but this author 
has since left mathematics and the likelihood of an author correction
seems slim.
%The crucial
%difference is that while the details in \cite{jensen} are not correct
%in all the particulars, the details here are (we hope!). 

Finding the characters of the tilting
modules for $\SL_n(k)$, the special linear group of $n$ by $n$
matrices, where $n$ is a positive integer and $k$ and
algebraically closed field of characteristic $p$ is equivalent to
finding the characters of the tilting modules for $\GL_n(k)$ (the
general linear group) which in
turn is equivalent %, by a result of ??? 
to finding decomposition
numbers for the symmetric group.% Thus the results here allow some
%3-part decomposition numbers for the symmetric group to be made
%explicit for $p \ge 5$.
The main result of \cite{jensen} is a description of the characters of
the tilting modules  for $\SL_3(k)$ for $p \ge 5$ and
when the highest weight lies on the edge
of the dominant region and lies in the second $p^2$ alcove away from the
origin. Thus we may deduce the decomposition numbers for the symmetric
group provided our partitions have at most three parts and whose
difference between the first and second 
part is at most $2p^2$ (approximately).
Actually, Jensen does goes past the 2nd
$p^2$ alcove slightly and so we may deduce decomposition
numbers up to $2p^2+2p-2$.

We then consider $p=2$ and $3$ where the information flows the other
way --- we use the known decomposition numbers for the symmetric group
to deduce various tilting modules.

Finally, we consider what may be said about the mixed quantum case and
deduce that analogous results hold there also.

\section{Notation}
%We use fairly standard notation for most things.
%We are working in the module category for $\SL_3(k)$, $k$ an
%algebraically closed field of characteristic $p \ge 5$.

We first review the basic concepts and most of the notation that we 
will be using. The reader is referred to
\cite{humph} and \cite{springer} for further information. 
This material is also in~\cite{jantz} where is it presented from the 
group schemes point of view.

Throughout this paper $k$ will be an algebraically closed field of
characteristic $p$, $p$ is usually greater than or equal to $5$.
%although we will consider examples with $p=2$ and $p=3$.
Let $G = \SL_3(k)$. We take $T$ to be the diagonal matrices in $G$ and 
$B$ to be the lower triangular matrices.
We let $W$ be the Weyl group of $G$ which is isomorphic to
the symmetric group on three letters.

We will write $\Mod(G)$ for the category of finite
dimensional rational $G$-modules. 
Most $G$-modules considered in this
paper will belong to this category. 
Let $X(T)=X = \Z^2$ be the weight lattice for $G$ 
 and $Y(T)=Y = \Z^2$ the dual weights.
The natural pairing $\langle -,- \rangle :X  \times Y
\rarr \Z$ is bilinear and induces an
isomorphism $Y\cong \Hom_\Z( X,\Z)$.
We take $R$ to be the roots of $G$. 
For each $\gamma \in R$ we take 
$\gammac \in Y$ to be  the coroot of $\gamma$. 
We set $\alpha=(2,-1)$ and $\beta=(-1,2)$, with $\alpha$, $\beta \in
 R$.
%We have $\langle \alpha, \alphac \rangle= 2$.
Then  $R^+=\{ \alpha, \beta, \alpha+\beta\}$ are the positive roots
and $S = \{\alpha, \beta \}$ are the simple roots. 
We also have $\rho = \alpha +\beta=(1,1)$, 
which is also half the sum of the positive roots.

We have a partial order on $X$ defined by 
$\mu \le \lambda \lrimpl \lambda -\mu \in \N S$.
A weight $\lambda$ is \emph{dominant} if
$\langle \lambda, \gammac \rangle \ge 0$ for all $\gamma \in S$ and
we let $X^+$ be the set of dominant weights.

%\section{The Induced Modules and the Weyl Modules}
Take $\lambda \in X^+$ and let $k_\lambda$ be the one-dimensional module
for $B$ which has weight $\lambda$. We define the induced
module, $\nabla(\lambda)= \Ind_B^G(k_\lambda)$. 
This module has formal character given by Weyl's character formula and has
simple socle $L(\lambda)$, 
the irreducible $G$-module of highest weight
$\lambda$.  Any finite dimensional, rational irreducible $G$-module
is isomorphic to $L(\lambda)$ for a unique $\lambda \in X^+$.
We will denote the socle of a module $M$ by $\soc(M)$.

%Since $G$ is split, connected and reductive we have an
%antiautomorphism, $\tau$, which acts as the identity on $T$
%(\cite{jantz}, II, corollary 1.16).
We may use the transpose matrix map to define an antiautomorphism on
$G$.
From this morphism we may define $^\circ$, 
a contravariant dual. It does not change a module's character, hence
it fixes the irreducible modules. 
We define the Weyl module, to be 
$\Delta(\lambda)=\nabla(\lambda)^\circ$.
Thus $\Delta(\lambda)$ has simple head $L(\lambda)$.

We say that a $G$-module has a \emph{good filtration} if it has a
filtration whose sections are isomorphic to induced modules and we say
it has a \emph{Weyl filtration} if it the sections are isomorphic to
Weyl modules.
A \emph{tilting module} is a module with both a good filtration and
 a Weyl filtration.
For each $\lambda \in X^+$ there is a unique \emph{indecomposable
tilting module} $T(\lambda)$, with  $[T(\lambda):L(\lambda)]=1$ where
the square brackets denote the composition multiplicity of
$L(\lambda)$ in $T(\lambda)$.
A tilting module can be decomposed as a sum of these indecomposable
ones. Note that tilting modules are self-dual $T^\circ(\lambda) \cong
T(\lambda)$ and hence that their socles must be isomorphic to their
heads.

We return to considering the weight lattice $X$ for $G$.
There are also the affine reflections
$s_{\gamma,mp}$ for $\gamma$ a positive
root and $m\in \Z$ which act on $X$ as
$s_{\gamma,mp}(\lambda)=\lambda -(\langle\lambda,\gammac\rangle -mp
)\gamma$.
These generate the affine Weyl group $W_p$.  
We mostly use the dot action of $W_p$ on $X$ which is 
the usual action of $W_p$, with the origin
shifted to $-\rho$. So we have $w \cdot \lambda = w(\lambda+\rho)-\rho$.
Each reflection in $W_p$ defines a hyperplane in $X$. 
A \emph{facet} for $W_p$ is a non-empty set of the form
\begin{equation*}
\begin{split}
F= \{ \lambda \in X\otimes_{\Z}\R\ \mid \ 
&\langle \lambda+\rho, \gammac
\rangle = n_\gamma p\quad \forall\, \gamma \in R^+_0(F),
\\
&(n_\gamma -1)p < \langle \lambda +\rho, \gammac \rangle < n_\gamma p
\quad \forall\, \gamma \in R_1^+(F)\} 
\end{split}
\end{equation*}
for suitable $n_\gamma \in \Z$ and for a disjoint decomposition
$R^+=R_0^+(F) \cup R_1^+(F)$.

The \emph{closure} $\bar{F}$ of $F$ is
\begin{equation*}
\begin{split}
\bar{F}= \{ \lambda \in X\otimes_{\Z}\R\ \mid \ 
&\langle \lambda_\rho, \gammac
\rangle = n_\gamma p\quad \forall\, \gamma \in R^+_0(F),
\\
&(n_\gamma -1)p \le \langle \lambda +\rho, \gammac \rangle \le
n_\gamma p
\quad \forall\, \gamma \in R_1^+(F)\} 
\end{split}
\end{equation*}

The \emph{lower closure} of $F$ is
\begin{equation*}
\begin{split}
 \{ \lambda \in X\otimes_{\Z}\R\ \mid \ 
&\langle \lambda_\rho, \gammac
\rangle = n_\gamma p\quad \forall\, \gamma \in R^+_0(F),
\\
&(n_\gamma -1)p \le \langle \lambda +\rho, \gammac \rangle < 
n_\gamma p
\quad \forall\, \gamma \in R_1^+(F)\} 
\end{split}
\end{equation*}

A facet $F$ is an \emph{alcove} if $R^+_0(F)=\emptyset$, (or
equivalently $F$ is open in $X\otimes_{\Z} \R$).
%% \cite bourbaki ch V \S 3 th. 2
If $F$ is an alcove for $W_p$ then its closure $\bar{F}\cap X$ is a
fundamental domain for $W_p$ acting on $X$. The group $W_p$
permutes the alcoves simply transitively.
We set 
$C= \{ \lambda \in X \otimes _{\Z} \R \ \mid\  0< \langle \lambda +\rho,
\gammac \rangle < p \quad \forall\, \gamma \in R^+ \}$ 
 and call $C$ the \emph{fundamental alcove}.

A facet $F$ is a \emph{wall} if 
there exists a unique $\beta\in R^+$ with $\langle \lambda
+\rho, \betac \rangle =mp$ for some $m \in \Z$ and for all $\lambda \in F$.
%Let $s_F= s_{\beta, mp}$. 
%This is the unique reflection in $W_p$
%which acts as the identity on $F$ and we call $s_F$ the
%reflection with respect to $F$.

%Let 
%$\Stab_{W_p}(\lambda)$ be all the elements of $W_p$ which stabilise
%$\lambda\in X$. 
%We take $\Sigma$ to be the set of all reflections $s_F$ where $F$
%is a wall (for $W_p$) with $F \subset \bar{C}$. 
%Thus the set $\Sigma$ consists of the reflections $s_{\alpha ,0}$ with
%$\alpha \in S$ together with $s_{\beta,p}$ with $\beta$ the longest
%short root of each irreducible component of the root system $R$.
%Let $\Sigma^0(\mu)$ be
%the subset of $\Sigma$ where each element of $\Sigma^0(\mu)$ fixes
%$\mu$. 
%The affine Weyl group $W_p$ is generated by $\Sigma$. These generators
%form a presentation for $W_p$ as a Coxeter group so we may define a
%length function $l(w)$ for $w \in W_p$ which is the 
%length of a reduced expression
%for $w$ in terms of elements of $\Sigma$.

We will also consider the group $W_{p^2}$, which is generated by
$s_{\alpha,0}$, $s_{\beta,0}$ and $s_{\rho,p^2}$. We may also define
$p^2$-alcoves and walls using the hyperplanes associated with
$W_{p^2}$. 

We say that $\lambda$ and $\mu$ are \emph{linked} if  they belong to the 
same $W_p$ orbit on $X$ (under the dot action).
If two irreducible modules $L(\lambda)$ and $L(\mu)$ are in the same $G$
block then $\lambda$ and $\mu$ are linked. 
%This is known as the linkage principle.

%\section{Cohomology}
%The category of rational $G$-modules has enough injectives and so we may
%define $\Ext_G^*(-,-)$ as usual by using injective
%resolutions (see \cite{benson}, section 2.4 and 2.5).
%We will usually just write $\Ext$ for $\Ext_G$.

We will extensively use \emph{translation functors}.
\begin{defn}
Given weights $\lambda, \mu$ in the closure of some alcove $F$,
there is a unique dominant weight $\nu$ in $W(\mu-\lambda)$.  We
define the \emph{translation functor} $T_{\lambda}^{\mu}$ from $\lambda$ to
$\mu$ on a module $V$ by 
$T_{\lambda}^{\mu} V= \pr_{\mu} (L(\nu) 
\otimes \pr_{\lambda} V)$, where $\pr_{\tau} V$ 
is the largest submodule of $V$ all of whose composition
factors have highest weights in $W_p.\tau $.
\end{defn}

The properties that we require are summarised in \cite{jensen}.
In particular we note the following:
translates of tilting modules are also tilting modules.
It is this principle that the proof of the main result is based on.
The real question becomes how to decompose the translates into their
indecomposable summands. Usually we translate away from the origin,
then the indecomposable tilting module of interest is the unique
indecomposable summand with highest weight in the dominance order.
%We will rely on the techniques used in \cite{jensen}.

One result used \cite[Proposition 4.1(ii)]{jensen} is that the
character of a tilting
module for $\SL_3(k)$ is also the sum of characters of tilting modules
for $q$-$\GL_3(\C)$, the quantum group in characteristic zero with $q$
a $p^r$th root of unity, $r \ge 1$, $r \in \Z$. 
Such ``quantum character considerations''
will prove crucial in proving the indecomposability of tilting modules
for $\SL_3(k)$. The character of such a tilting module must
simultaneously 
be the character of a (decomposable) tilting module for
$\sqrt[p]{1}$-$\GL_3(\C)$,
$\sqrt[p^2]{1}$-$\GL_3(\C)$,
$\sqrt[p^3]{1}$-$\GL_3(\C)$ and so on. As the characters of the tilting
modules for $q$-$\GL_3(\C)$ are known, this gives a lower bound for
the possible character of a tilting module for $\SL_3(k)$.

\section{The Base Case}

We now assume that $G=\SL_3(k)$ although much of what we say in this
section generalises in an appropriate way to the more general case.
Throughout this paper we will draw diagrams of the weight space for
$G$. We will usually have large triangles for the $p^2$-alcoves, and
smaller ones for the usual alcoves. We will also draw short lines for
walls. We will almost always label $-\rho$ which is the bottom corner of the
fundamental alcove.

When drawing such pictures we are usually drawing the ``character''
for a tilting module. This means entering the highest weights with
multiplicities of the induced modules appearing in a good filtration
of the tilting module. It is easy to apply translation functors to
such diagrams, using the following proposition.
\begin{propn}[{Janzten \cite[proposition 7.13]{jantz}}]\label{propn:transgood}
Let $\mu$, $\lambda\in \bar{C}$ and $w \in W_p$ with $w\cdot \mu
\in X^+$, then $T_\mu ^\lambda \nabla(w \cdot \mu) $ has a good
filtration.
Moreover the factors are $\nabla(ww_1 \cdot \lambda)$ with $w_1 \in
\Stab_{W_p}(\mu)$ and $ww_1\cdot\lambda \in X^+$. Each different $ww_1
\cdot \lambda$ occurs exactly once.
\end{propn}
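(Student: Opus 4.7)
The plan is to derive this from three ingredients already standard in the literature: the tensor product theorem for good filtrations (Donkin--Mathieu), explicit control over the weights of the small irreducible $L(\nu)$, and exactness of the block projection $\pr_\lambda$.

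First I would consider the module $L(\nu)\otimes\nabla(w\cdot\mu)$, where $\nu\in X^+$ is the dominant weight in $W(\mu-\lambda)$ used in the definition of $T_\mu^\lambda$. By the tensor product theorem this module has a good filtration, and by the standard multiplicity formula the multiplicity of $\nabla(\sigma)$ among its sections equals $\dim L(\nu)_{\sigma-w\cdot\mu}$ for each $\sigma\in X^+$. Applying the exact functor $\pr_\lambda$ then preserves the good filtration and retains precisely those sections $\nabla(\sigma)$ with $\sigma\in W_p\cdot\lambda$, giving a good filtration of $T_\mu^\lambda\nabla(w\cdot\mu)$ whose sections we must identify.

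The second step is to pin down which dominant $\sigma\in W_p\cdot\lambda$ satisfy $\dim L(\nu)_{\sigma-w\cdot\mu}\neq 0$, and with what multiplicity. Since $\mu,\lambda\in\bar{C}$, the difference $\mu-\lambda$ lies in the closure of a fundamental domain for $W$ and is $W$-conjugate to $\nu$. Writing $\sigma=v\cdot\lambda$ for some $v\in W_p$, the linkage principle combined with the fact that $\mu,\lambda$ lie in the closure of the same alcove shows that $\sigma-w\cdot\mu\in W\nu$ forces $v=ww_1$ with $w_1\in\Stab_{W_p}(\mu)$. Because the extreme weights $W\nu\subset L(\nu)$ each occur with multiplicity exactly one, each such $ww_1\cdot\lambda\in X^+$ contributes exactly one $\nabla$-section, matching the claimed formula.

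The main obstacle, and the step where the hypothesis $\mu,\lambda\in\bar{C}$ is doing the real work, is ruling out the possibility that an \emph{interior} weight of $L(\nu)$ (of possibly higher multiplicity) also yields a dominant $\sigma\in W_p\cdot\lambda$. This is handled by an alcove-geometric argument: if $\sigma=w\cdot\mu+\tau$ with $\tau$ a weight of $L(\nu)$ strictly smaller than the extreme $W$-orbit, then $\sigma$ cannot lie in $W_p\cdot\lambda$, since the translation from the facet containing $\mu$ to the one containing $\lambda$ is controlled precisely by the orbit $W\nu$. Verifying this, together with the injectivity of $w_1\mapsto ww_1\cdot\lambda$ on the set $\{w_1\in\Stab_{W_p}(\mu) : ww_1\cdot\lambda\in X^+\}$, completes the argument.
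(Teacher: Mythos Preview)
The paper gives no proof of this proposition; it is quoted from \cite{jantz} as a standard background result, so there is nothing in the paper to compare against. Your outline has the right shape and is recognisably the strategy behind Jantzen's argument, but two of your intermediate assertions are not correct as written.

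First, appealing to the Donkin--Mathieu tensor product theorem for $L(\nu)\otimes\nabla(w\cdot\mu)$ requires $L(\nu)$ itself to have a good filtration, i.e.\ $L(\nu)=\nabla(\nu)$. The hypothesis $\lambda,\mu\in\bar C$ does not force $\nu\in\bar C$: taking $\mu=-\rho$ and $\lambda$ another vertex of $\bar C$ already gives $\nu$ outside the lowest alcove with $\nabla(\nu)$ reducible, so that $L(\nu)\otimes\nabla(0)=L(\nu)$ has no good filtration. Jantzen does not argue this way; he filters $L(\nu)$ as a $B$-module by one-dimensional weight spaces, applies the tensor identity, and shows directly that after $\pr_\lambda$ the resulting filtration has $\nabla$-quotients. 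Second, your formula $(L(\nu)\otimes\nabla(w\cdot\mu):\nabla(\sigma))=\dim L(\nu)_{\sigma-w\cdot\mu}$ is false in general --- already for $\SL_2$ with $\nu=2$ and $w\cdot\mu=0$ it wrongly predicts a $\nabla(0)$ section --- because the character expansion carries signed Weyl-group corrections when some $w\cdot\mu+\nu'$ fail to be dominant. What rescues the final count is exactly the alcove lemma you flag as the main obstacle: after projecting to $W_p\cdot\lambda$ only the extremal weights $\nu'\in W\nu$ survive, each with multiplicity one, and the corrections vanish. So your destination is correct, but the route needs the more careful bookkeeping Jantzen supplies rather than a direct appeal to the general tensor-product machinery.
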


We refer to weights which lie on only one hyperplane as \emph{wall}
weights and those which lie on more than one (and hence three as this
is $\SL_3(k)$) as \emph{Steinberg} weights.
When visualising the weight space we will refer to the subset of
dominant weights as the dominant region. Weights which lie close to
the edge of this dominant region and which do not have a Steinberg
weight lying in the lower closure of the facet containing them are
referred to as \emph{just dominant} weights.

The characters of the indecomposable tilting modules in the bottom
$p^2$ alcove are well known. They are either the translate of a
near-by simple module with highest weight a Steinberg weight,
or they can be deduced by decomposing translates (necessary for
tilting modules whose highest weight is just dominant.)
In all cases the characters coincide with the characters of the
indecomposable tilting modules for an associated quantum group in
characteristic zero with $q$ a $p$th root of unity.

Once we have a starting set of characters we can produce more
indecomposable tilting modules using translation.
\begin{lem}
Suppose $\lambda$ is not a Steinberg weight and is not just dominant.
Let $\sigma$ be the unique Steinberg weight lying in the lower closure
of the facet containing $\lambda$.
Then
$$T(\lambda) \cong T_{\sigma}^\lambda T(\sigma).$$ 
\end{lem}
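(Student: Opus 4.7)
The plan is to show that $M := T_{\sigma}^\lambda T(\sigma)$ is an indecomposable tilting module in which $\nabla(\lambda)$ appears exactly once in a good filtration and $\lambda$ is the unique maximal weight; the uniqueness of $T(\lambda)$ then forces $M \cong T(\lambda)$.

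First, $M$ is a tilting module since translates of tilting modules are tilting. To identify its structure, I would apply Proposition~\ref{propn:transgood} to each section $\nabla(w\cdot\sigma)$ of a good filtration of $T(\sigma)$: each such section translates to a direct sum of terms $\nabla(w w_1 \cdot \lambda)$ with $w_1$ running over a transversal of $\Stab_{W_p}(\lambda)$ in $\Stab_{W_p}(\sigma)$ for which $w w_1 \cdot \lambda \in X^+$. The maximal weight so produced is $\lambda$ itself (from $w = w_1 = 1$). The assumption that $\lambda$ is not just dominant is precisely what guarantees that the $\Stab_{W_p}(\sigma)$-orbit of $\lambda$ behaves regularly with respect to the dominant chamber, so that $\nabla(\lambda)$ appears exactly once in the filtration of $M$. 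Consequently $T(\lambda)$ is a direct summand of $M$ with multiplicity one.

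For indecomposability I would use the adjunction between $T_\sigma^\lambda$ and $T_\lambda^\sigma$, giving
\[
\End_G(M) \;\cong\; \Hom_G\!\bigl(T(\sigma),\,T_\lambda^\sigma M\bigr).
\]
The aim is to show $T_\lambda^\sigma M \cong T(\sigma)$, so that $\End_G(M) \cong \End_G(T(\sigma))$ is local (since $T(\sigma)$ is indecomposable) and hence $M$ is indecomposable. To prove $T_\lambda^\sigma T_\sigma^\lambda T(\sigma) \cong T(\sigma)$, one notes that both sides are tilting modules; one then computes their characters by two successive applications of Proposition~\ref{propn:transgood} and verifies that the terms indexed by pairs in $\Stab_{W_p}(\sigma) \times \Stab_{W_p}(\sigma)$ contributing to the double translation telescope back precisely to the sections of $T(\sigma)$. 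Agreement of characters combined with $\sigma$ being the unique maximal weight of both sides then forces the isomorphism.

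The main obstacle is exactly this composition step. The character bookkeeping is delicate, and the ``not just dominant'' hypothesis is essential: without it, the $\Stab_{W_p}(\sigma)$-orbit of $\lambda$ could meet the wall of the dominant region, producing missing or spurious $\nabla$-factors after translating back, which would break the telescoping and permit additional indecomposable summands $T(\mu)$ with $\mu < \lambda$ to appear in $M$.
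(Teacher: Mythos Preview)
Your indecomposability argument contains a genuine error: the claim $T_\lambda^\sigma M \cong T(\sigma)$ is false, so there is no ``telescoping''. Apply Proposition~\ref{propn:transgood} twice. For each section $\nabla(w\cdot\sigma)$ occurring in $T(\sigma)$, the composite $T_\lambda^\sigma T_\sigma^\lambda$ returns one copy of $\nabla(ww_1\cdot\sigma)=\nabla(w\cdot\sigma)$ for every coset $w_1\in\Stab_{W_p}(\sigma)/\Stab_{W_p}(\lambda)$ with $ww_1\cdot\lambda\in X^+$. Since $\sigma$ lies in the \emph{lower} closure of the facet of $\lambda$, there are always at least two such cosets, and generically all $|\Stab_{W_p}(\sigma):\Stab_{W_p}(\lambda)|$ of them contribute (six, when $\lambda$ is an alcove weight). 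Thus $T_\lambda^\sigma M$ is a tilting module whose good filtration contains $\nabla(\sigma)$ with multiplicity strictly greater than $(T(\sigma):\nabla(\sigma))$, and your adjunction gives only $\dim_k\End_G(M)=\dim_k\Hom_G\bigl(T(\sigma),T_\lambda^\sigma M\bigr)>1$. The ring structure transported to the right-hand side is precisely the unknown ring $\End_G(M)$ you are trying to analyse; it \emph{is} local, but nothing in your outline establishes this. (A secondary point: the hypothesis ``not just dominant'' is what guarantees that a Steinberg weight $\sigma$ exists in the lower closure at all; it is not needed for $(M:\nabla(\lambda))=1$, which already follows from the final clause of Proposition~\ref{propn:transgood}.)

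The paper does not attempt an endomorphism-ring argument. It instead appeals to Donkin's tilting tensor product formula \cite[Proposition~2.1]{donktilt} combined with the known structure of the $G_1$-injective hulls of simple modules for $\SL_3(k)$, which identifies $T_\sigma^\lambda T(\sigma)$ directly as the indecomposable $T(\lambda)$; alternatively one may generalise the translation argument of \cite[Proposition~4.2]{jensen}. Either route bypasses the problem of computing $\End_G(M)$.
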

\begin{proof}
This presumably is well known, but can be deduced 
by generalising the argument in  \cite[proposition 4.2]{jensen} 
or by using Donkin's tilting tensor product formula \cite[proposition
  2.1]{donktilt} and the known information about the injective
$G_1$-hulls of simple modules for $\SL_3(k)$.
\end{proof}

The question really becomes then: What are the characters of the
tilting modules whose highest weight is just dominant?

\section{The Inductive Step}

Throughout this section we will assume that $p \ge 5$. 
We need $p \ge 3$ in order to apply Andersen's sum formula
\cite{andersum},
and the ``inductive step'' for $p=3$ is only needed for two modules,
neither of which are difficult to decompose and do not display the
generic behaviour seen in the inductive step.
%Even the case $p=5$ is a bit degenerate, and we only see truly generic
%behaviour when $p \ge 7$. In fact, this may be the cause of the error
%in Jensen's exposition --- too much reliance on $p=5$ to be generic
%enough to give the general proof.

The problem in Jensen's proof lies in his inductive step
(\cite[sections~5.3,~5.4]{jensen}), namely, he
doesn't explicitly state the embedding as part of the inductive
hypothesis, and consequently perhaps, this embedding is not proved
properly as part of the induction. In particular, summands are removed
from the embedding without comment nor justification. We thus
consider this part of the proof in greater detail, and take the base case of
his induction as given, but take a much loser look at the ``minimal
embeddings''. 
We consider the wall case  rather than the
alcove case as there are less weights involved, thus cutting down the
notated weights. The wall version and
alcove versions are equivalent by translation and 
\cite[proposition 4.2]{jensen}. The alcove case may be obtained by
translating the wall case off the wall, an indecomposable tiling
module remains indecomposable, but the number of weights involved
doubles.

We have the following picture (figure \ref{fig:base}) as our base case
for the induction.
This is the wall version of the base case of Jensen's induction
\cite[figure 1 (d)]{jensen}, see also
figure 5 (a) in the same reference.
\begin{figure}[ht]
\begin{center}
\epsfbox{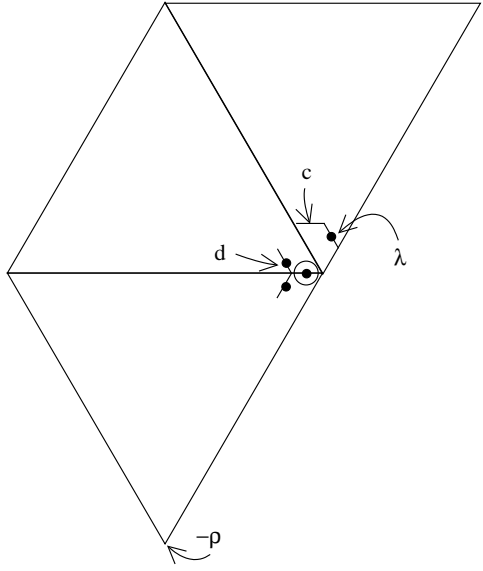}
%\hspace{0pt}\hbox{\pdfximage{highlow2.pdf}\pdfrefximage\pdflastximage}
\end{center}
\caption{\label{fig:base}Diagram showing highest weights of the
  induced modules appearing in a good filtration for $T(\lambda)$. 
 The large triangles are $p^2$-alcoves, the small short
  lines are $p$-walls and the filled-in circles are the weights in the
  filtration. The only weight with multiplicity two has a second
  circle around it. 
}
\end{figure}
We have a minimal embedding
$$T(\lambda) \hookrightarrow T(c) \oplus T(d).$$
It is worth clarifying what such a minimal embedding really means.
It turns out that the modules of the right hand side of the above
equation both have simple socle. It also turns out that they are
injective for an associated generalised Schur algebra for a suitable
value of the degree of this Schur algebra. Details may be found in
\cite{devisschdonk}. Thus, for a suitable truncation of $\Mod G$ the
 module  $T(c) \oplus T(d)$ is the injective hull of $T(\lambda)$.
Indeed, in the proof of the following theorem, we could truncate
$\Mod(G)$ to those modules whose composition factors have highest
weight in the bottom 4 $p^2$-alcoves. In this category, any
indecomposable tilting module whose highest weight is not just dominant
and is not in the bottom $p^2$-alcove is injective.
Since the tilting modules are self-dual, the injective hull of
$T(\lambda)$ is also its projective cover. We do not use this fact in
the sequel however.

The following theorem differs from that in Jensen in that the
embeddings proved as part of the induction are not the ones he used.
We can recover the embeddings he used which are in fact minimal, and do so in 
Proposition \ref{propn:embed} but we could not see how to get them as
part of the proof of Theorem \ref{thm:main} but rather as a corollary
of it. 

\begin{thm}\label{thm:main}
We have the following cases:
\begin{enumerate}
\item[{\it{Case (a)}}]
Suppose $\lambda$ is just dominant and lies on an $\alpha$-wall then
$T(\lambda)$ has character as depicted in figure
\ref{fig:lambdaandmu} (a) and has an embedding:
$$T(\lambda) \hookrightarrow T(a) \oplus T(b) \oplus T(c)\oplus T(d)$$
\item[{\it{Case (b)}}]
Suppose $\mu$ is just dominant and lies on a horizontal wall
($\rho$-wall) then $T(\mu)$ has character as depicted in figure
\ref{fig:lambdaandmu} (b) and has an embedding:
$$T(\mu) \hookrightarrow T(e) \oplus T(f) \oplus T(g) \oplus T(h)$$
\end{enumerate}
\end{thm}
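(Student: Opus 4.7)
The plan is to proceed by induction on the distance (in $p^2$-alcoves) of the just-dominant weight from the origin, taking the configuration of figure \ref{fig:base} together with the minimal embedding $T(\lambda_0)\hookrightarrow T(c_0)\oplus T(d_0)$ as the base case. By the symmetry of the root system (reflecting $\alpha$-walls to $\rho$-walls via an outer element of the Weyl group), it suffices to carry out the argument in detail for Case (a); Case (b) follows by transferring the picture. The three essential ingredients are Proposition \ref{propn:transgood} (which tells us precisely what the good-filtration sections of a translated induced module are), the fact recalled in Section 2 that translates of tilting modules are tilting, and the quantum character lower bound provided by \cite[Proposition 4.1(ii)]{jensen}, applied simultaneously at $p$-th, $p^2$-th, $p^3$-th, \ldots\ roots of unity.

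Concretely, I would select a Steinberg weight $\sigma$ in the lower closure of the facet containing a previously established just-dominant wall weight $\lambda'$, and a wall weight $\lambda$ of the same type in the lower closure of the facet one alcove further from the origin. Applying $T_{\sigma}^{\lambda}$ to the inductive embedding
$$T(\lambda')\hookrightarrow T(a')\oplus T(b')\oplus T(c')\oplus T(d')$$
produces an embedding of tilting modules. I would then use Proposition \ref{propn:transgood} to write out, weight by weight, the good-filtration sections of each translated indecomposable on the right-hand side; this gives a precise upper bound on the character of the translated left-hand side. The unique indecomposable summand of $T_{\sigma}^{\lambda}T(\lambda')$ whose highest weight is $\lambda$ is by construction $T(\lambda)$, and its character must include everything forced by $T_{\sigma}^{\lambda}\nabla(\lambda')$. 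The candidate set of summands appearing on the right-hand side of the new embedding is then read off from the orbit computation for $w_1 \in \Stab_{W_p}(\sigma)$.

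The principal obstacle --- and the place where Jensen's original argument is not fully convincing --- is the control of which summands survive and which are stripped off when passing from $T_{\sigma}^{\lambda}\bigl(T(a')\oplus\cdots\oplus T(d')\bigr)$ to the desired $T(a)\oplus T(b)\oplus T(c)\oplus T(d)$. To handle this rigorously I would carry the embedding itself through the induction as part of the hypothesis (rather than only the character), and bound the character of $T(\lambda)$ from below using the quantum consideration: the character must simultaneously be a positive sum of $q$-$\GL_3(\C)$ tilting characters at every $\sqrt[p^r]{1}$. The alcove-geometric picture of figure \ref{fig:lambdaandmu}(a) is then forced by matching the translation upper bound against this quantum lower bound weight-by-weight, and any would-be extra summand on the right-hand side fails to admit any multiple of $L(\lambda)$ in its socle once the quantum character is pinned down.

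Finally, minimality of the embedding is not claimed here and is not needed for the induction; it is a separate issue (addressed later in Proposition \ref{propn:embed}) that follows by extracting, inside $\Mod(G)$ truncated to the bottom four $p^2$-alcoves, the injective hull of $T(\lambda)$ using the results of \cite{devisschdonk}. Maintaining a possibly non-minimal but explicitly described embedding throughout the induction is precisely what lets us avoid the silent removal of summands that obscured the original argument.
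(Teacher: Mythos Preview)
Your outline misses the actual mechanism of the induction in two essential ways.

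First, the reduction of Case (b) to Case (a) by ``an outer element of the Weyl group'' does not exist. The diagram automorphism of $A_2$ swaps $\alpha$ and $\beta$; it fixes $\rho$-walls and carries $\alpha$-walls to $\beta$-walls on the \emph{other} edge of the dominant region, not to $\rho$-walls. In the paper the two cases are not symmetric but \emph{interleaved}: one translates $T(\lambda)$ (an $\alpha$-wall weight) to the adjacent $\rho$-wall to produce $M_1$, decomposes $M_1$ to obtain $T(\mu)$, then translates $T(\mu)$ again to the next $\alpha$-wall weight $\eta$ and decomposes to obtain $T(\eta)$. The induction is $\lambda \to \mu \to \eta$, alternating wall types at each step, and the two halves of the argument are genuinely different. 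Relatedly, your proposed translation $T_\sigma^\lambda$ from a Steinberg weight $\sigma$ in the lower closure of the facet of a just-dominant $\lambda'$ cannot be set up: by definition a just-dominant weight has \emph{no} Steinberg weight in the lower closure of its facet. The paper translates directly between wall weights, $T_\lambda^\mu$ and $T_\mu^\eta$.

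Second, the sentence ``the alcove-geometric picture \ldots\ is then forced by matching the translation upper bound against this quantum lower bound'' is where the real work hides, and the quantum bound alone does not close the gap. After translating one obtains a tilting module $M_1$ (resp.\ $M_2$) whose character is known; the problem is to prove that specific summands split off with specific multiplicities, so that what remains has exactly the conjectured character. The paper uses three separate tools you do not mention: restriction to an $\SL_2$ Levi to show $T(j)$ is a summand of $M_2$; Andersen's sum formula \cite{andersum} to show that $T(i)$ occurs in $M_2$ with multiplicity two (and hence $T(h)$ twice in $M_1$); and a $\Hom_G(\Delta(v),-)$ dimension count to show $T(v)$ splits off from $Q$. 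Only \emph{after} these summands are removed does the quantum-character argument enter, and then only to certify that the remaining piece is indecomposable. Your proposal to carry a (non-minimal) embedding through the induction and to trim the right-hand side by socle considerations is exactly right in spirit and matches what the paper does, but without the sum formula and the Levi restriction you have no way to establish the multiplicities needed to pin down the character of $T(\mu)$ in the first place.
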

\begin{figure}[ht]
\begin{center}
\epsfbox{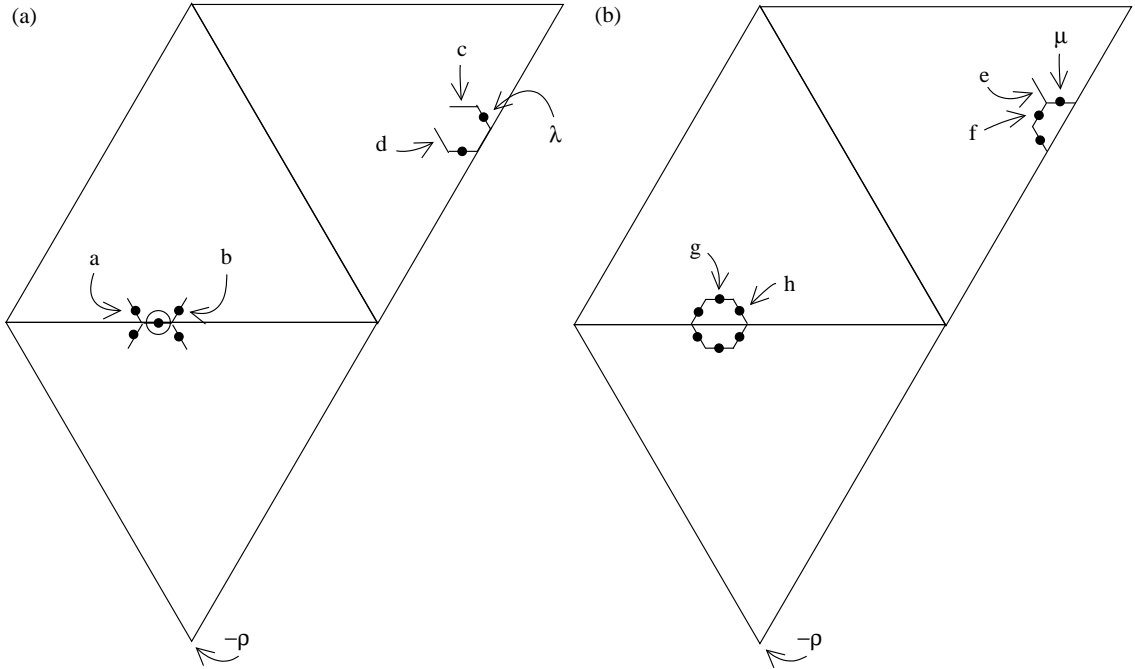}
%\hspace{0pt}\hbox{\pdfximage{highlow2.pdf}\pdfrefximage\pdflastximage}
\end{center}
\caption{\label{fig:lambdaandmu}{Diagram showing highest weights of the
  induced modules appearing in a good filtration for $T(\lambda)$ and
  $T(\mu)$. The large triangles are $p^2$-alcoves, the small short
  lines are $p$-walls and the filled-in circles are the weights in the
  filtration. The only weight with multiplicity two has a second
  circle around it. Diagram (a) is the wall version of 
  {\cite[figure~2(f)]{jensen}} and  diagram (b) is the wall version of 
  {{\cite[figure~2(e)]{jensen}}}. }}
\end{figure}
\begin{proof}
%Proof of Jensen's result for $\SL_3(k)$.
%
We prove the result by induction taking the base case depicted in
figure \ref{fig:base} as read. 

For this part compare with \cite[section 5.3]{jensen}.
We assume that we have a tilting module $T(\lambda)$ as depicted
in figure \ref{fig:lambdaandmu}~(a). We now translate $T(\lambda)$ to
the wall containing
$\mu$. We get a module $M_1$ which is tilting, but not necessarily
indecomposable, which is depicted in figure \ref{fig:ltomu}.
\begin{figure}[ht]
\begin{center}
\epsfbox{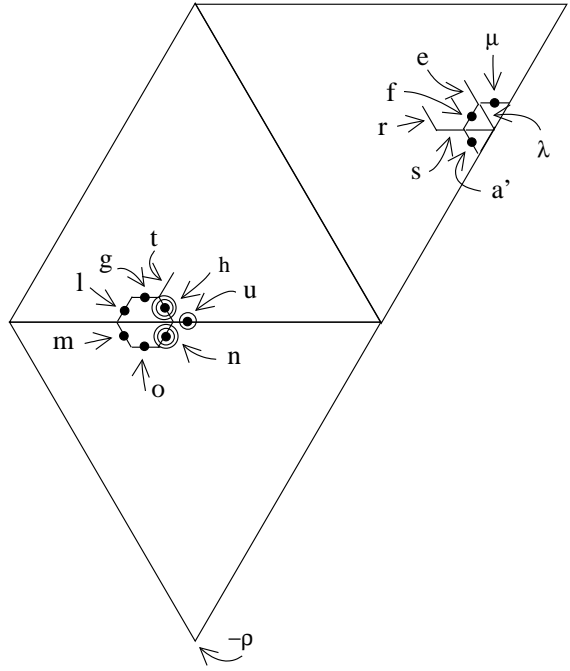}
%\hspace{0pt}\hbox{\pdfximage{highlow2.pdf}\pdfrefximage\pdflastximage}
\end{center}
\caption{\label{fig:ltomu}Diagram showing highest weights of the
  induced modules appearing in a good filtration of 
  $M_1 := T_{\lambda}^{\mu}T(\lambda)$. 
  The large triangles are $p^2$-alcoves, the small short
  lines are $p$-walls and the filled-in circles are the weights in the
  filtration. Multiplicities are indicated by extra concentric circles.
}
\end{figure}

We translate $M_1$ again to the next wall and call this tilting module
$M_2$. This module is depicted in figure \ref{fig:mutoeta} (a).
\begin{figure}[ht]
\begin{center}
\epsfbox{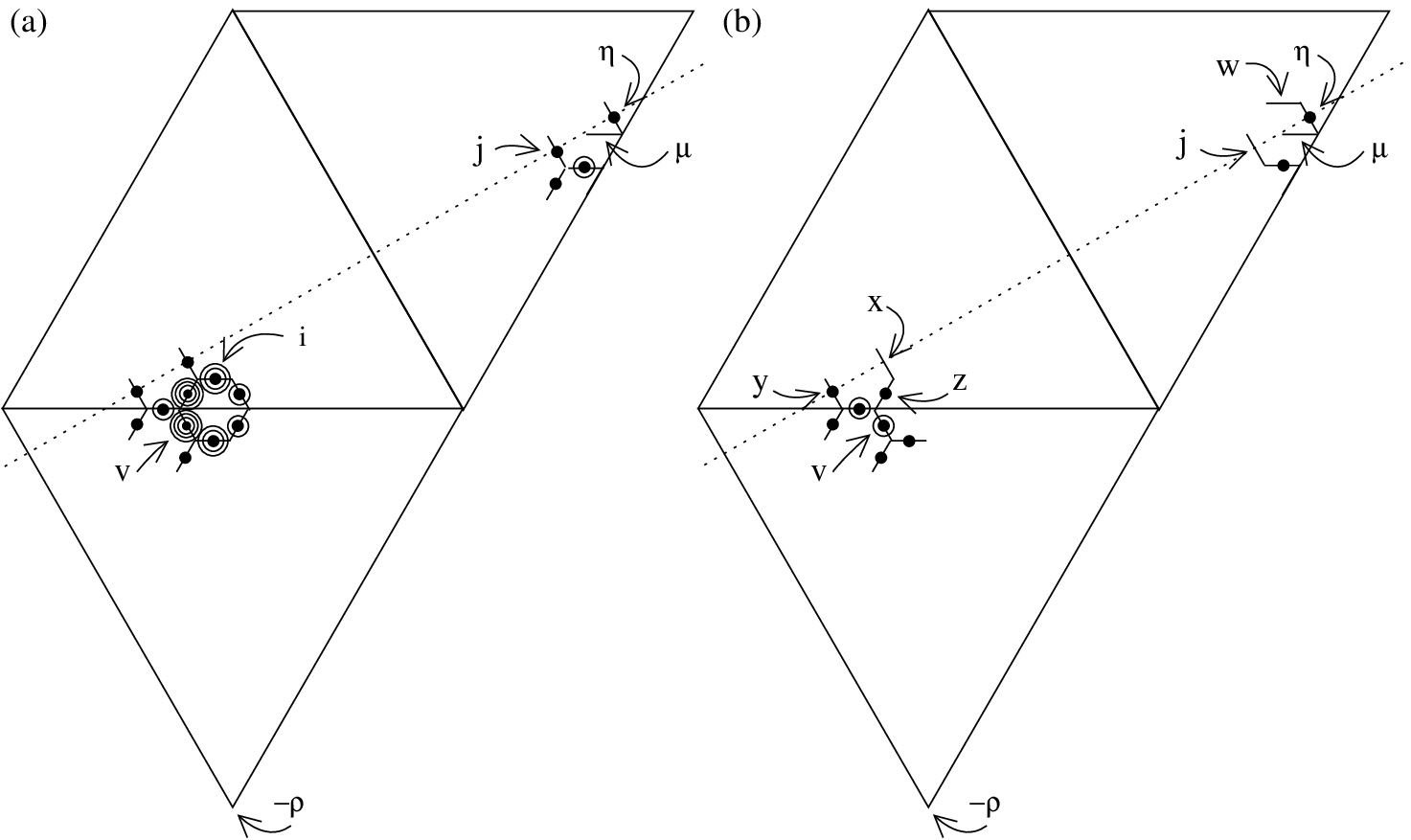}
%\hspace{0pt}\hbox{\pdfximage{highlow2.pdf}\pdfrefximage\pdflastximage}
\end{center}
\caption{\label{fig:mutoeta}(a)Diagram showing highest weights of the
  induced modules appearing in a good filtration of 
  $M_2 := T_{\mu}^{\eta}M_1$. 
(b) Diagram showing the module $Q$ which is $M_2$ with $T(j)$ removed
  and $T(i)$ removed twice.
  In both diagrams, the large triangles are $p^2$-alcoves, the small short
  lines are $p$-walls and the filled-in circles are the weights in the
  filtration. Multiplicities are indicated by extra concentric circles.
}
\end{figure}
Now it is clear that the tilting module with highest weight $j$ may be
removed by $\SL_2$ considerations. 
The dotted line in figure
\ref{fig:mutoeta} (a) and (b) is the line parallel to $\alpha$ which 
goes through $\mu$. Restricting to the corresponding Levi subgroup,
which is isomorphic to $\SL_2(k)$ and using the results of
\cite[section 4.2]{donkbk}, the multiplicities of the weights along
this line are the same as the multiplicities in $\SL_2(k)$. In
particular, $\nabla(j)$ is not a component of $T(\eta)$. As $j$ is the
next highest weight in $M_2$, this means that $T(j)$ is in fact a
direct summand of $M_2$.

We now want to remove two copies of
$T(i)$ from $M_2$ if possible. We can do this using Andersen's sum
formula \cite{andersum}. The value we get for the sum formula for $M_2$ with the
summand $T(j)$ removed, is zero and so $T(i)$ occurs with multiplicity
two in $M_2$. If we translate back again to $\mu$ then we see that
$T(h)$ occurs with multiplicity two in $M_1$.
(We can do this by applying \cite[lemma 4.7]{jensen} to the translates
of $M_2$ and $M_1$ into the alcoves and noting that translating off
a wall into an alcove preserves the number of indecomposable
components.)

We thus have that $M_1$ has $T(h)\oplus T(h)$ as a direct summand
and $M_2$ has $T(i) \oplus T(i) \oplus T(j)$ as a direct summand.
We can therefore write $M_2$ as $Q \oplus T(i) \oplus T(i) \oplus
T(j)$. 
We will return to decompose $Q$ further later, but we claim that 
$Q= T(\eta) \oplus T(v)$.

Now the module we get when we remove $T(h)\oplus T(h)$ from $M_1$ is
tilting and has the character depicted in figure
\ref{fig:lambdaandmu}(a). Now a tilting module with such a character
must be indecomposable by quantum character considerations. Thus we
must have $M_1 = T(\mu) \oplus T(h) \oplus T(h)$ and we have shown
that $T(\mu)$ has the desired character. We now consider what the
socle of $T(\mu)$ can be. 

Now since $T(\mu)$ is self-dual and has a good filtration, we must have
$$\soc T(\mu)
\subseteq \bigl( \bigoplus (\hd\nabla(\nu))^{(T(\mu) :
  \nabla(\nu))}\bigr)
\bigcap \bigl(\bigoplus (\soc\nabla(\nu))^{(T(\mu) : \nabla(\nu))}\bigr).
$$ 
We also know that the head of $\nabla(\mu)$ ($= L(l)$) 
must appear in the socle of
$T(\mu)$ (as it is self-dual) and the socle of the ``bottom'' $\nabla$
in a good filtration must be in the socle. Thus
\begin{equation}\label{socTmu}
 L(o) \oplus L(l)\subseteq \soc T(\mu) \subseteq
L(h) \oplus L(l) \oplus L(m) \oplus L(n) \oplus L(o),
\end{equation}
where we use the labelling of figure \ref{fig:ltomu} for the weights.

We now translate the previous embedding. So
$$T_\lambda^\mu T(\lambda) \hookrightarrow
T_\lambda^\mu (T(a) \oplus T(b)\oplus T(c) \oplus T(d))$$
Thus
$$T(\mu) \oplus T(h) \oplus T(h) \hookrightarrow
T(g) \oplus T(h) \oplus T(h) \oplus T(e) \oplus T(f) \oplus T(h)
\oplus
T(r) \oplus T(s)
$$
where $h$ and $r$ coincide for the very first application of the
inductive step, but are generically different.
Hence
$$T(\mu) \hookrightarrow
T(g)  \oplus T(e) \oplus T(f) \oplus T(h) \oplus T(r) \oplus T(s)
$$
The socles (in order) of the tilting modules on the right hand side
are:
$L(o)$, $L(l)$, $L(h)$, $L(n)$, $L(t)$ and $L(u)$.
Thus by intersecting this list and equation \eqref{socTmu} we get that
$$\soc T(\mu) \subseteq 
L(h) \oplus L(l) \oplus L(n) \oplus L(o)$$
and
\begin{equation} \label{tmuembed}
T(\mu) \hookrightarrow
T(g)  \oplus T(e) \oplus T(f) \oplus T(h) 
\end{equation}

In \cite[section 5.3]{jensen}, it is rightly claimed that when we
translate the minimal embedding (Jensen's claimed embedding) 
of $T(\lambda)$ to $T(\mu)$ that we get an
embedding 
$$
T(\mu) \oplus T(h) \oplus T(h) \hookrightarrow
T(e)  \oplus T(f) \oplus T(g) \oplus T(h) \oplus T(h) \oplus T(h) 
\oplus T(r) 
$$
(corresponding to the $\theta$, $\pi$, $\tau$, $\alpha$ and $\psi$ of
\cite[figure 6]{jensen} respectively).
After removal of two copies of $T(h)$ (=$T(\alpha)$) and $T(r)$, whose
socle does not coincide with the highest weight of $\nabla$ appearing
in $T(\mu)$ we get an
embedding
$$
T(\mu)  \hookrightarrow
T(e)  \oplus T(f) \oplus T(g) \oplus T(h) 
$$
\emph{not} $T(e) \oplus T(f) \oplus T(g)$ as claimed in \cite{jensen}.
There is no justification given for the removal of the extra
$T(h)$. In fact, we have to work harder to remove this extra summand
in the next proposition.

%It remains to show that $T(h)$ is not needed for this
%embedding. This is equivalent to showing that the socle of $T(h)$
%which is $L(n)$ is not in the socle of $T(\mu)$.
%
%Now $L(n)$ is in the socle of $T(\mu)$ if the socle of $\nabla(n)$
%which appears in $T(\mu)$
%moves down into the socle of $T(\mu)$. Since the unique indecomposable 
%extension of $\nabla(o)$ by $\nabla(n)$ has simple socle, this is
%equivalent to saying that this extension does not embed into $T(\mu)$.
%Since this extension is isomorphic to $T_\eta^\mu \nabla(v)$ (see
%figure \ref{fig:mutoeta} for $v$), this is the same as saying that $\nabla(v)$
%does not embed in $T_{\mu}^{\eta} T(\mu) = Q \oplus T(j)$. (Recall that
%$Q$ is the tilting module in figure \ref{fig:mutoeta} (b).)
%
%Thus we will be able to show that $\nabla(v)$ embeds in $Q$ and hence
%that $L(n)$ is not in the socle of $T(\mu)$ when we have proved that
%$Q = T(\eta) \oplus T(v)$, where $T(\eta)$ has the character depicted
%in figure \ref{fig:lambdaandmu}(a) with $\eta$ in place of $\lambda$.

We take the embedding in equation \eqref{tmuembed} and translate it to
$\eta$.
We get
\begin{align*}
T_{\mu}^{\eta} T(\mu) = Q \oplus T(j)
\hookrightarrow
&\  T_{\mu}^{\eta} (T(e)  \oplus T(f) \oplus T(g) \oplus T(h))\\
&= T(w)\oplus  T(j)\oplus  T(j)\oplus  T(i)\oplus  T(x)
\oplus  T(y)\oplus  T(z).
\end{align*}
Note that $T_{\mu}^{\eta} T(h) = T(i)$.
So 
\begin{equation} \label{Qembed}
Q
\hookrightarrow
 T(w)\oplus  T(j)\oplus  T(i)\oplus  T(x)
\oplus  T(y)\oplus  T(z) 
\end{equation}
using the labels depicted in figure \ref{fig:mutoeta} (b),
(corresponding to the $\eta$, $\mu$, $\gamma$, $\nu$, $\iota$ and
$\kappa$ of \cite[figure 6]{jensen}
respectively). Note that $j$ may coincide with $x$ although
generically it doesn't. 
This embedding is in contrast to the claimed embedding obtained
by Jensen in \cite[section 5.4]{jensen}. He gets
$$
Q
\hookrightarrow
 T(\eta)\oplus  T(\iota) \oplus  T(\kappa)\oplus T(\lambda).
$$
Firstly, the $\lambda$ should be a $\nu$. This is not a serious error
as $T(\lambda)$ embeds into $T(\nu)$. The extra copy of $T(\mu)$ is
missing --- possibly due to thinking that the translate of $T(\pi)$ is
$T(\mu)$ rather than $T(\mu)\oplus T(\mu)$.
He doesn't have a $T(\gamma)$ as this comes from the extra
$T(\alpha)$.
The remark about the multiplicites and referring to $\lambda_1$
(surely not relevant to the induction?) doesn't make sense, as they
already have multiplicity one.

We continue with our own argument by noting that
%At this point we note that 
$T(v)$ embeds in $T(x)$ so our obtained embedding for $Q$  is still
consistent with our claim that $Q = T(\eta) \oplus T(v)$.

\begin{figure}[ht]
\begin{center}
\epsfbox{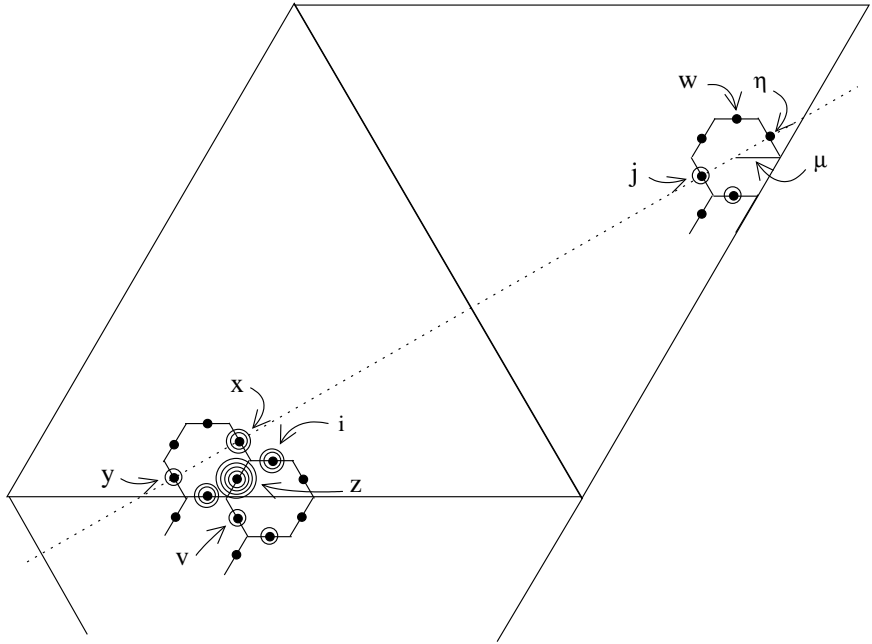}
%\hspace{0pt}\hbox{\pdfximage{highlow2.pdf}\pdfrefximage\pdflastximage}
 \end{center}
\caption{\label{fig:Qembed}
Diagram showing the module $N$.
In the diagram, the large triangles are $p^2$-alcoves, the small short
lines are $p$-walls and the filled-in circles are the highest weights in a
good 
filtration of the module. Multiplicities are indicated by extra
concentric circles.
}
\end{figure}

Now the socle of $T(i)$ is not the head of any $\nabla(\xi)$ appearing
in a good filtration of $Q$ and so the socle of $Q$ cannot contain the
socle of $T(i)$. Thus we can remove $T(i)$ from the right hand side of
the
previous equation to get,
\begin{equation*}
Q
\hookrightarrow
 T(w)\oplus  T(j)\oplus  T(x)
\oplus  T(y)\oplus  T(z) =: N
\end{equation*}
where we have defined $N$ to be the module given by the right hand 
side of this equation. This module is depicted in figure
\ref{fig:Qembed}. (The module $N$ is analogous to the module $\bar{Q}$ in
\cite[section 5.4]{jensen} but with an extra $T(\mu)$ and $T(\nu)$ in
place of $T(\lambda)$.)

We have $(N:\nabla(v)) = (Q:\nabla(n)) =2$ and so
$\Hom_G(\Delta(v), Q) \cong \Hom_G(\Delta(v), N) \cong k^2$.
We may now repeat the argument of this part of the proof of
\cite{jensen} to show that $T(v)$ is a summand of $Q$.

When we remove $T(v)$ from $Q$ we get a module which is indecomposable
by consideration of quantum characters and we so get a tilting module,
which must be $T(\eta)$, of the desired character.

Since $\soc T(v) = \soc T(x)$ and this is distinct from the socles of
$T(j)$, $T(w)$, $T(y)$ and $T(z)$ we also get that
\begin{equation} \label{embedeta}
T(\eta)
\hookrightarrow
 T(w)\oplus  T(j) \oplus  T(y)\oplus  T(z) 
\end{equation}
and this is the desired embedding.
%
%We now also note that $\nabla(v)$ embeds in $T(\eta)$ and so 
%$T_\eta^\mu \nabla(v)$ embeds in $T_\eta^\mu T(\eta)=T(\mu)\oplus T(h)
%\oplus T(h)$. Since  $T_\eta^\mu \nabla(v)$ has simple socle and this
%socle does not embed in $T(h)$ we must have
%that
%$T_\eta^\mu \nabla(v)$ embeds in $T(\mu)$
%and thus $L(j)$ is not in
%the socle of $T(\mu)$ and so 
%$$T(\mu) \
%\hookrightarrow
%T(e)\oplus  T(f) \oplus  T(g)
%$$
%which is the desired embedding for $T(\mu)$. This embedding must be
%minimal. Firstly we must have $T(e)\oplus T(g)$  on the right hand side
%as the socle of $T(e) \oplus T(g)$ must be contained in the
%socle of $T(\mu)$ using equation \eqref{socTmu}. Also the good filtration of
%$T(e)\oplus T(g)$ does not contain $\nabla(a')$ which is in $T(\mu)$ 
%and so characters tell
%us that $T(e) \oplus T(g)$ cannot give us the minimal embedding for
%$T(\mu)$.
\end{proof}

%\begin{rem}
%We don't claim that the embedding in equation \eqref{embedeta} is
%minimal.
%Indeed, Jensen's original proof claimed (without justification) that
%\begin{equation*} %\label{embedeta}
%T(\eta)
%\hookrightarrow
 %T(w)\oplus  T(y)\oplus  T(z). 
%\end{equation*}
%We have not be able to prove this statement, although we suspect it is
%true and that the socle of $T(\eta)$ is 
%$\soc T(w) \oplus \soc T(y)\oplus \soc  T(z)$.
%\end{rem}

%The embeddings used in the above proof are not minimal 
%and are
%not the ones used by Jensen. He claimed that the following embeddings
%were minimal and used them in his proof. Unfortunately, these
%embeddings were not fully justified. 
%We can
%now prove that the embeddings are minimal --- but we need the previous
%proof to do this!

\begin{propn}\label{propn:embed}
The following maps are minimal embeddings.
\begin{equation} \label{embedminmu}
T(\mu)
\hookrightarrow
 T(e)\oplus  T(f)\oplus  T(g) 
\end{equation}
and 
\begin{equation} \label{embedminlambda}
T(\lambda) \hookrightarrow T(a) \oplus T(b) \oplus T(c).
\end{equation}
\end{propn}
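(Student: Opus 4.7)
The plan is to refine the four-summand embeddings obtained in the proof of Theorem~\ref{thm:main} by dropping the redundant summand. I will concentrate on establishing~\eqref{embedminmu}; the minimal embedding~\eqref{embedminlambda} for $T(\lambda)$ is then obtained by the same argument at the $\alpha$-wall, or alternatively by translating~\eqref{embedminmu} off the $\rho$-wall using Proposition~\ref{propn:transgood}.

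The key step is to show that $L(n) = \soc T(h)$ is not contained in $\soc T(\mu)$. Since $T(\mu)$ has a good filtration, the standard Ext-vanishing for tilting modules gives $\dim \Hom_G(\Delta(n), T(\mu)) = (T(\mu) : \nabla(n))$, and precomposition with the canonical surjection $\Delta(n) \twoheadrightarrow L(n)$ induces an injection $\Hom_G(L(n), T(\mu)) \hookrightarrow \Hom_G(\Delta(n), T(\mu))$. Reading off the character of $T(\mu)$ from figure~\ref{fig:lambdaandmu}(b) shows that $n$ is not the highest weight of any section in a good filtration of $T(\mu)$, so $(T(\mu) : \nabla(n)) = 0$, and hence $\Hom_G(L(n), T(\mu)) = 0$. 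Combined with the bound from~\eqref{socTmu}, this refines the socle estimate to $\soc T(\mu) \subseteq L(l) \oplus L(h) \oplus L(o)$.

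To deduce the embedding, write the map in~\eqref{tmuembed} as $(\phi_e, \phi_f, \phi_g, \phi_h)$ and let $N = \ker(\phi_e, \phi_f, \phi_g)$. Since the whole tuple is injective, $\phi_h$ restricts to an injection $N \hookrightarrow T(h)$, so $\soc N \hookrightarrow \soc T(h) = L(n)$; but $\soc N \subseteq \soc T(\mu)$ contains no $L(n)$. Hence $N = 0$, and $(\phi_e, \phi_f, \phi_g) \colon T(\mu) \hookrightarrow T(e) \oplus T(f) \oplus T(g)$ is the desired embedding.

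Minimality of~\eqref{embedminmu} is equivalent to $\soc T(\mu) = L(l) \oplus L(h) \oplus L(o)$, each simple occurring with multiplicity one. The inclusions $L(l), L(o) \subseteq \soc T(\mu)$ are already established in the proof of Theorem~\ref{thm:main}, so the principal obstacle I foresee is verifying the remaining inclusion $L(h) \in \soc T(\mu)$. By self-duality of $T(\mu)$ this amounts to exhibiting a nonzero map $T(\mu) \twoheadrightarrow L(h)$; I would try to locate $L(h)$ in the head of an appropriate $\nabla(\nu)$-section of the good filtration (identified from figure~\ref{fig:lambdaandmu}(b)) and lift to a quotient of $T(\mu)$ using the Ext-vanishing properties of the tilting module, with Andersen's sum formula available as a backup to control the relevant radical layers.
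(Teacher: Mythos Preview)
Your argument for~\eqref{embedminmu} rests on the claim that $(T(\mu):\nabla(n))=0$, but this is false: $\nabla(n)$ \emph{does} occur in the good filtration of $T(\mu)$. Indeed, the bound~\eqref{socTmu} in the proof of Theorem~\ref{thm:main} is obtained by intersecting the set of heads with the set of socles of the $\nabla$-sections of $T(\mu)$; since $\soc\nabla(\nu)=L(\nu)$, the appearance of $L(n)$ on the right-hand side of~\eqref{socTmu} already forces $(T(\mu):\nabla(n))\ge 1$. The paper even says so explicitly in its proof of this proposition (``the socle of $\nabla(n)$ which appears in $T(\mu)$''). So the chain $\Hom_G(L(n),T(\mu))\hookrightarrow\Hom_G(\Delta(n),T(\mu))=0$ breaks down, and the whole removal of $T(h)$ collapses.

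The paper's route is genuinely more delicate: one must show that the copy of $\nabla(n)$ inside $T(\mu)$ does not contribute its socle to $\soc T(\mu)$. This is done by observing that $\nabla(n)$ sits on top of $\nabla(o)$ in a nonsplit extension with simple socle, namely $T_\eta^\mu\nabla(v)$, and that this extension embeds in $T(\mu)$; by adjunction this amounts to $\nabla(v)\hookrightarrow T_\mu^\eta T(\mu)=Q\oplus T(j)$, which was established in the proof of Theorem~\ref{thm:main} when $T(v)$ was split off from $Q$. Your minimality argument also takes the hard road: rather than trying to show $L(h)\in\soc T(\mu)$ directly, the paper notes that $T(e)$ and $T(g)$ are forced by $L(l),L(o)\subseteq\soc T(\mu)$, and then a pure character comparison ($\nabla(a')$ occurs in $T(\mu)$ but not in $T(e)\oplus T(g)$) forces $T(f)$ as well. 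Finally, for~\eqref{embedminlambda} the paper does not repeat the $\mu$-argument at the $\alpha$-wall; it instead translates $L(b)$ to $L(u)$ and checks $L(u)\notin\soc(T_\lambda^\mu T(\lambda))$, which is a shorter computation.
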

\begin{proof}
Consider equation \eqref{tmuembed}.
It remains to show that $T(h)$ is not needed for this
embedding. This is equivalent to showing that the socle of $T(h)$
which is $L(n)$ is not in the socle of $T(\mu)$.

Now $L(n)$ is in the socle of $T(\mu)$ if the socle of $\nabla(n)$
which appears in $T(\mu)$
moves down into the socle of $T(\mu)$. Since the unique indecomposable 
extension of $\nabla(o)$ by $\nabla(n)$ has simple socle, this is
equivalent to saying that this extension does not embed into $T(\mu)$.
Since this extension is isomorphic to $T_\eta^\mu \nabla(v)$ (see
figure \ref{fig:mutoeta} for $v$), this is the same as saying that $\nabla(v)$
does not embed in $T_{\mu}^{\eta} T(\mu) = Q \oplus T(j)$. (Recall that
$Q$ is the tilting module in figure \ref{fig:mutoeta} (b).)
But we have shown that
$\nabla(v)$ embeds in $Q$ and hence
that $L(n)$ is not in the socle of $T(\mu)$. 

Now note that this embedding must be
minimal. Firstly we must have $T(e)\oplus T(g)$  on the right hand side
as the socle of $T(e) \oplus T(g)$ must be contained in the
socle of $T(\mu)$ using equation \eqref{socTmu}. Also the good filtration of
$T(e)\oplus T(g)$ does not contain $\nabla(a')$ which is in $T(\mu)$ 
and so characters tell
us that $T(e) \oplus T(g)$ cannot give us the minimal embedding for
$T(\mu)$.

The previous proof showed that $T(\lambda)$ embeds into 
$T(a) \oplus T(b) \oplus T(c) \oplus T(d)$.
We wish to show that $T(d)$ is not required for this embedding.
I.e. that the socle of $T(d)$ which $L(b)$ is not in the socle of
$T(\lambda)$. Now if $L(b)$ were in the socle of $T(\lambda)$ then 
we must have
$$T_{\lambda}^{\mu} L(b) \hookrightarrow T_{\lambda}^{\mu} T(\lambda).
$$
Now $T_{\lambda}^{\mu} L(b) = L(u)$ which isn't in the socle
of $T_{\lambda}^{\mu} T(\lambda)$ and so $L(b)$ cannot be in the socle
of $T(\lambda)$.

Also note that this embedding must be minimal as both the socles of
$T(a)$ and $T(b)$ are the weights of ``bottom'' $\nabla$'s in a good
filtration of $T(\lambda)$ and the socle of $T(c)$ is the head of
$\nabla(\lambda)$ and so must also be in the socle of $T(\lambda)$.
\end{proof}

\section{The next $p^2$ alcove.}

What happens when $\mu$ or $\lambda$ lies on a $p^2$ wall?
We may then proceed as Jensen did to produce pictures of
the characters for various tilting modules, so we do not reproduce his
results here. We unfortunately get to the same impasse as Jensen in
that we cannot prove that his picture in 
\cite[figure 7]{jensen} is the character of a tilting module. However
we have verified the results in \cite[figure 3]{jensen}. 
%--- we have just produced
%the wall version of \cite[figure 3]{jensen}.
%
So is the following picture the character of an indecomposable tilting
module?
\begin{figure}[ht]
\begin{center}
\epsfbox{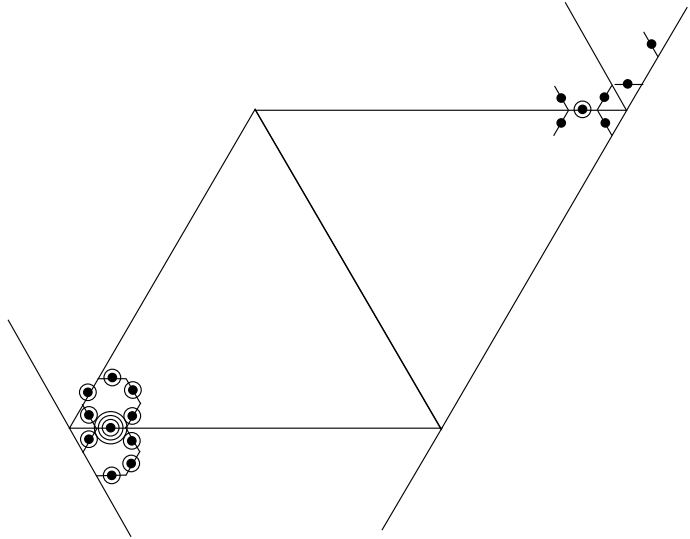}
\caption{Conjectured character of an indecomposable tilting module}
\end{center}
\end{figure}

\section{Multiplicities for $\GL_n(k)$}\label{sect:gln}

In this section we give the filtration multiplicities calculated so
far in weight form rather than diagram form and convert them to
weights for $\GL_n(k)$.
These are easily calculated using the action of the affine Weyl group
and we just summarise the results in the tables below.
Since filtration multiplicities for tilting modules are the same
as decomposition numbers for the symmetric group, this also gives some
three-part decomposition numbers, using the formula
$(T(\lambda):\nabla(\mu)) = [S^\mu:D^\lambda]$. Here $S^\mu$ is a
Specht module and $D^\lambda$ is a simple module for the symmtetric
group. See \cite{erdkluwer} (classical case) and
 \cite[(5) section 4.4]{donkbk} (quantum case) for details.

Let $a$, $b \in \N$ with $0\le a \le p-2$ and $0\le b \le p-2$,
and where $p$ is the characteristic of $k$.
For $p\ge5$ we have the following non-zero multiplicities for 
$(T(\lambda):\nabla(\mu))$, all other multiplicities are zero.
%$$
%\begin{tabular}{|c|c|c|c|}
%\cline{3-4}
% \multicolumn{2}{c}{} & \multicolumn{2}{|c|}{$\lambda$}\\
%\hline
%\multicolumn{2}{|c|}{$\mu$} &$\SL_3(k)$ weight & $\GL_3(k)$ weight \\
%\hline
%$\SL_3(k)$ weight & $\GL_3(k)$ weight  
%&($p^2+ap+p-1$, $b$) & ($p^2+ap+p-1+b$, $b$, $0$) \\  
%\hline
%($p^2+ap+p-1$, $b$) & ($p^2+ap+p-1+b$, $b$, $0$) & & \\
%\end{tabular}
%$$
$$
\begin{tabular}{|l|c|c|}
\hline
{$\mu \quad \backslash \quad \lambda$} & ($p^2+pa+p-1+b$, $b$, $0$) \\
\hline
 ($p^2+pa+p-1+b$, $b$, $0$) & 1 \\
 ($p^2+pa+b-1$, $p-1$, $b+1$) & 1 \\
 ($p^2+b-1$, $pa+ p +b$, $0$) & 1 \\
 ($p^2+p-2$, $pa+ b$, $b+1$) & 1 \\
 ($p^2+b-1$, $pa+ p-1$, $b+1$) & 2 \\
 ($p^2-2$, $pa+ p+b$, $b+1$) & 1 \\
 ($p^2+b-1$, $pa+b$, $p$) & 1 \\
\hline
\end{tabular}
$$
with the convention that if a weight above is not dominant
(i.e. does not have $\mu_1 \ge \mu_2 \ge \mu_3 $ then the multiplicity
is zero.

For  
$a$, $b \in \N$ with $1\le a \le p-2$ and $0\le b \le p-2$.
For $p\ge5$ we have the following non-zero multiplicities for 
$(T(\lambda):\nabla(\mu))$, all other multiplicities are zero. 
$$
\begin{tabular}{|l|c|c|}
\hline
{$\mu \quad \backslash \quad \lambda$} 
& ($p^2+pa+p-2$, $p-2+b$, $0$) \\  
\hline
($p^2+pa+p-2$, $p-2+b$, $0$)  & 1\\  
 ($p^2+pa+p+ b-3$, $p-1$, $0$) & 1 \\
 ($p^2+pa-2$, $p-1$, $p-b-1$) & 1 \\
 ($p^2+p-2$, $pa+ p -b-2$, $0$) & 1 \\
 ($p^2+p-b-3$, $pa+ p-1$, $0$) & 1 \\
 ($p^2+p-2$, $pa-1$, $p-b-1$) & 1 \\
 ($p^2-2$, $pa+p+b-1$, $p-b-1$) & 1 \\
 ($p^2+p-b-3$, $pa-1$, $p$) & 1 \\
 ($p^2-2$, $pa+p-b-2$, $p$) & 1 \\
\hline
\end{tabular}
$$
again, with the convention that if a weight above is not dominant
then the multiplicity is zero.

The alcove version is as follows:
Let $a$, $r$, $s \in \N$ with $2\le a \le p-2$ and $0\le r+s \le p-3$.
For $p\ge5$ we have the following non-zero multiplicities for 
$(T(\lambda):\nabla(\mu))$, all other multiplicities are zero.
$$
\begin{tabular}{|l|c|c|}
\hline
{$\mu \quad \backslash \quad \lambda$} 
& ($p^2+pa+r+s$, $s$, $0$)& ($p^2+pa-p+s-1$,   \\  
& & $p+r+s+1$, $0$) \\
\hline
$\mu_1=$ ($p^2+pa+r+s$, $s$, $0$) & 1& 0\\  %1
$\mu_2=$ ($p^2+pa+s-1$, $r+s+1$, $0$) & 1&1 \\  %2
$\mu_3=$ ($p^2+pa-p+r+s$, $p+s$, $0$) & 0&1 \\  %3
$\mu_4=$ ($p^2+pa-2$, $r+s+1$, $s+1$) & 0&1 \\  %4
$\mu_5=$ ($p^2+pa-p+r+s$, $p-1$, $s+1$) & 1&1 \\%5
$\mu_6=$ ($p^2+pa-p+s-1$, $p-1$, $r+s+2$) & 1&1 \\%6
$\mu_7=$ ($p^2+pa-p-2$, $p+s$, $r+s+2$) & 0&1 \\%7
$\mu_8=$ ($p^2+p+s-1$, $pa-p+r+s+1$, $0$) & 0&1 \\%8
$\mu_9=$ ($p^2+r+s$, $pa+ s$, $0$) & 1&1 \\%9
$\mu_{10}=$ ($p^2+p-2$, $pa-p+r+s+1$, $s+1$) & 1&1 \\%10
$\mu_{11}=$ ($p^2+p+s-1$, $pa-p-1$, $r+s+2$) & 0&1 \\%11
$\mu_{12}=$ ($p^2+s-1$, $pa+r+s+1$, $0$) & 1&0 \\%12
$\mu_{13}=$ ($p^2+r+s$, $pa-1$, $s+1$) & 2&1 \\%13
$\mu_{14}=$ ($p^2+p-2$, $pa-p+s$, $r+s+2$) & 1&1 \\%14
$\mu_{15}=$ ($p^2-2$, $pa+r+s+1$, $s+1$) & 1&0 \\%15
$\mu_{16}=$ ($p^2+s+1$, $pa-1$, $r+s+1$) & 2&1 \\%16
$\mu_{17}=$ ($p^2+r+s$, $pa-p+s$, $p$) & 1&1 \\%17
$\mu_{18}=$ ($p^2-2$, $pa+s$, $r+s+2$) & 1&1 \\%18
$\mu_{19}=$ ($p^2+s-1$, $pa-p+r+s+1$, $p$) & 1&1 \\%19
$\mu_{20}=$ ($p^2+r+s$, $pa-p-1$, $p+s+1$) & 0&1 \\%20
$\mu_{21}=$ ($p^2-2$, $pa-p+r+s+1$, $p+s+1$) & 0&1 \\%21
\hline
\end{tabular}
$$
again, with the convention that if a weight above is not dominant
then the multiplicity is zero.
The $\SL_3(k)$ picture of the weights $\mu_i$
is depicted in figure \ref{fig:subs}.
\begin{figure}[ht]
\begin{center}
\epsfbox{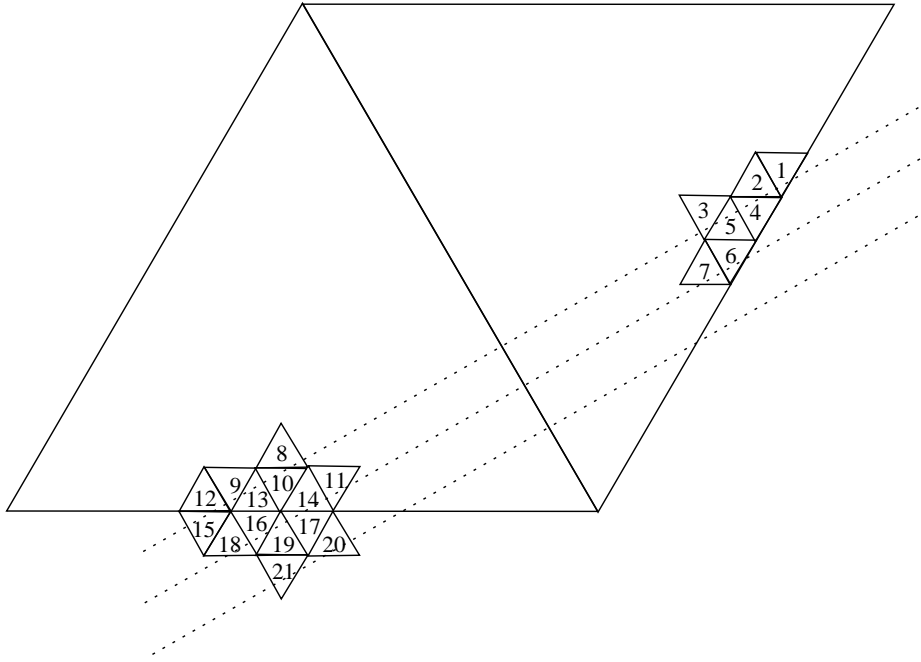}
%\hspace{0pt}\hbox{\pdfximage{highlow2.pdf}\pdfrefximage\pdflastximage}
 \end{center}
\caption{\label{fig:subs}
Diagram indicating the $\SL_3(k)$ weight corresponding to $\mu_i$.
In the diagram, the large triangles are $p^2$-alcoves, the small
triangles are $p$-alcoves, and these are labelled by the subscript $i$.
The dashed lines are lines parallel to $\alpha$ and are meant as an
aid to determine which $W_p$ element is used to map one weight to
another.
}
\end{figure}

%\section{Applying to the symmetric group}
%
%In this section we do
%an example to illustrate how our result for
%$\SL_3(k)$ may be translated into three part decomposition numbers for
%the symmetric group.
%
%\begin{eg}
%Suppose we want to find some three part decomposition
%numbers for the symmetric group $\mathfrak{S}_{58}$ in characteristic
%5. Since $(T(\lambda):\nabla(\mu)) = [S^\mu:D^\lambda]$,
%the filtration multiplicites for $T(\lambda)$ give the 
%entries in the columns of the decomposition matrix. We can thus
%compute the entries for $\lambda$ with three parts and not too large
%and $\mu \le \lambda$ and $\mu$ has three parts.
%
%
%\end{eg}

\section{Small primes}

All the results in the previous sections were for $p\ge 5$. In this
section
we review some of what is known for $p=2$ or $p=3$.
For $p\ge 5$ it is easier to calculate the characters of the tilting
modules than it is to calculate decomposition numbers for the
symmetric group. For small primes the information often flows the other way.
Thus in this section we convert the known decomposition
numbers for the symmetric group into character diagrams for the
tilting modules for $p=2$.

Explicit decomposition matrices are known for the symmetric group for
$p=2$ up to $n=18$ and for $p=3$ up to $n=17$. 
These decomposition matrices were found
by J\"urgen M\"uller and have been implemented in Gap4, \cite{gap4}.  

For the prime 3 the results known for 3 part partitions are
essentially the same as the results obtained for $p \ge5$. We can
actually push the 3 part decomposition numbers further, up to $n=22$.
We may take the tilting module $T(17,0)$ and continue translating, as
in the $p\ge 5$ case. The translates remain indecomposable and so we
may obtain the tilting modules (on the edge) up to $n=22=2.9 + 2.3 -2$ as in the
$p\ge5$ case.

For prime 2 more interesting things happen --- partly because $18$ is
bigger that $2^4$! Thus the prime two case gives a hint at what may
happen for larger primes once we get past the next $p^2$ wall.
We include pictures of all the edge cases for prime two up to the
$\SL_3$ weight $(18,0)$. (The non-edge cases may be found as for the
other primes by translating the appropriate tilting module off a
Steinberg weight.) In these pictures we have drawn the $2$-hyperplanes
as dashed lines, the $2^2$-hyperplanes as solid lines, the 
$2^3$-hyperplanes as thicker solid lines and so on. Multiplicites higher
than 3 are indicated with numbers rather then numerous concentric
circles.

The first 8 pictures may be thought of as degenerate versions of the
(wall version) of the $p\ge5$ result. Once we have highest weight
$(10,0)$, however, the patterns change.

%\begin{figure}[ht]
%\begin{center}
$$
\epsfbox{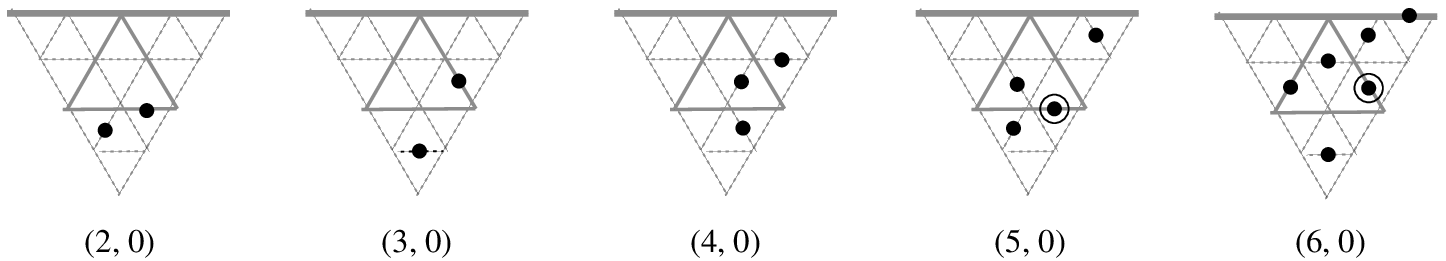}
$$
%\hspace{0pt}\hbox{\pdfximage{highlow2.pdf}\pdfrefximage\pdflastximage}
% \end{center}
%\end{figure}
$$
\epsfbox{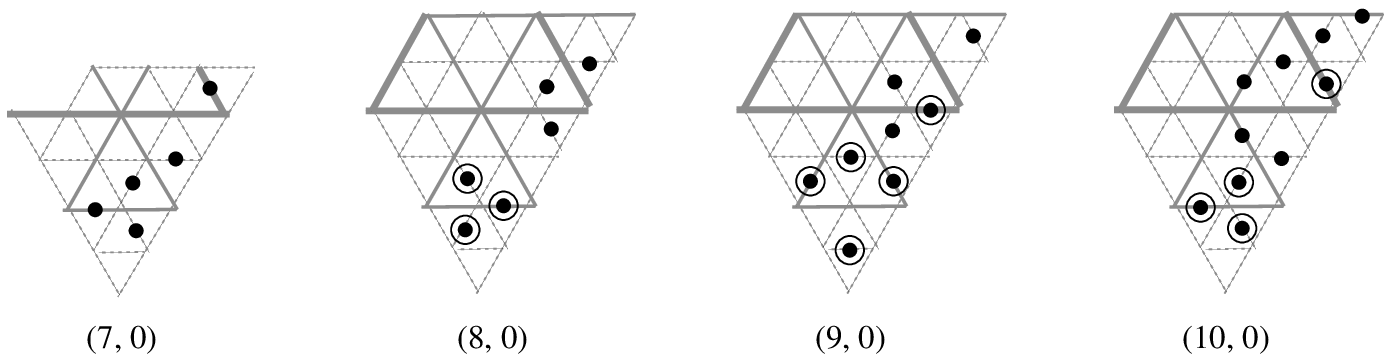}
$$
$$
\epsfbox{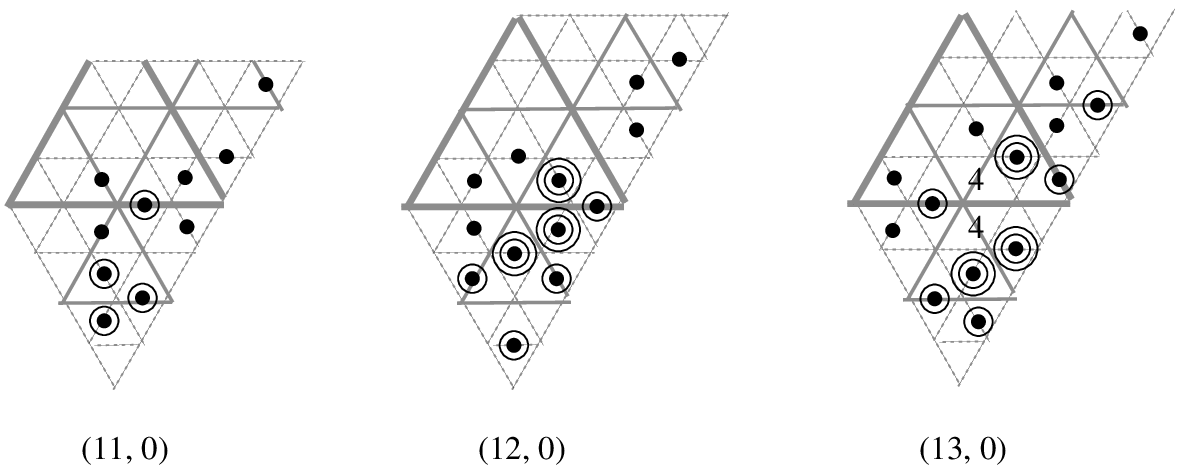}
$$
$$
\epsfbox{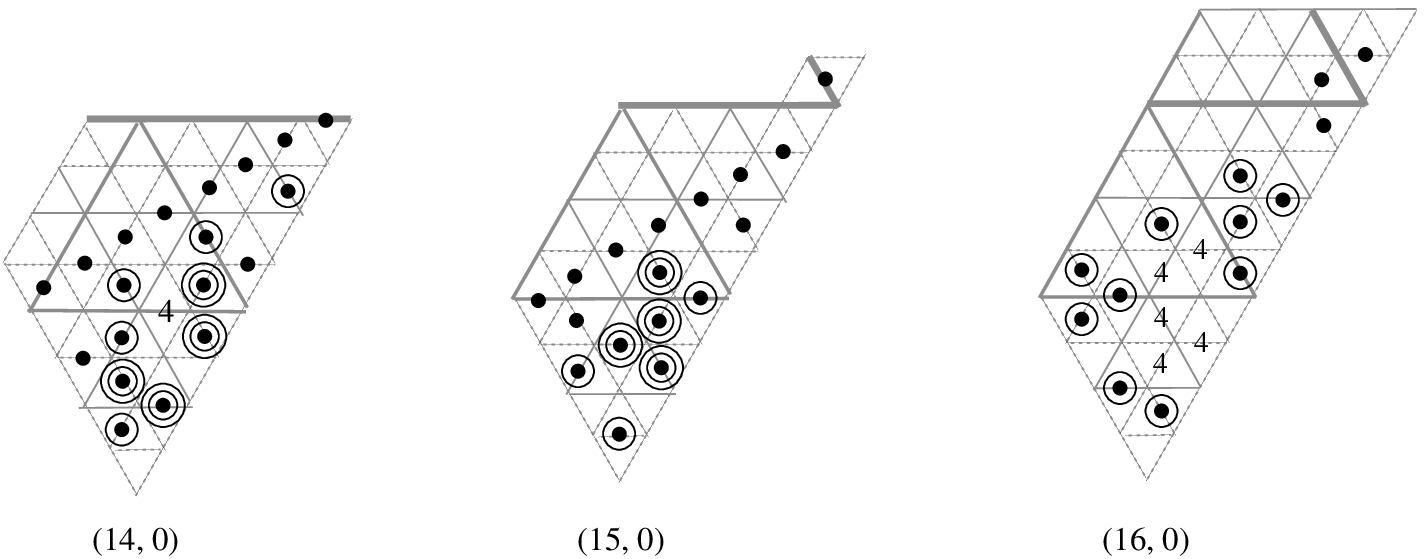}
$$
$$
\epsfbox{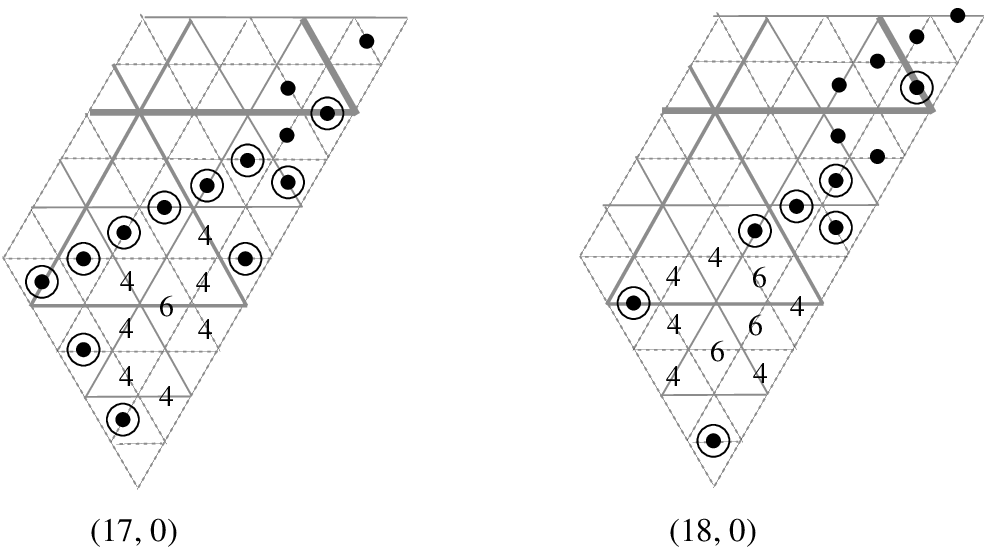}
$$
Close study of these pictures do reveal some patterns.
For instance, the ``top'' half of the diagram for $T(16,0)$ 
is the same as all of $T(8,0)$.
These type of repetitions are  to be expected, as the 
corresponding Schur algebra
often has a quotient which is isomorphic to a smaller Schur algebra.
(The quotient result is implicit in \cite{marwood98}
 and more explicit in \cite{henkoen}.)
Unfortunately, prehaps two is still too small to give insight into
the generic case. Especially given its divergence after $T(10,0)$.

We also see that quantum character arguments are not always enough to
show that a module is indecomposable.
If we consider the character for $T(15,0)$ we see that this tilting
module is \emph{not} predicted to be indecomposable using a quantum
character argument rather character arguments predict that $T(4,1)$ is a
summand of this module.

\section{The quantum case}

In this section we consider what we can say about the quantum case.
Here we use the Dipper-Donkin quantum group $q$-$\GL_3(k)$ defined 
in~\cite{dipdonk}.  
The interesting case is the so called ``mixed case'', the one
where $q$ is a primitive $l$th root of unity and $k$ has
characteristic $p$. We necessarily have $p > l$.
We may generalise our argument from $\SL_3(k)$ to $q$-$\GL_3$ by
replacing all occurrences of $p^2$ with $pl$ (and $p^3$ by $p^2l$ and
so on). To do this we need to assume that there are alcove weights,
so that Andersen's sum formula is valid. Thus, we need to assume that $l
\ge 3$ and hence that $p \ge 5$.

The other key ingredient is the result that the character of a
quantum tilting module in the mixed case should also be the
sum of characters of tilting modules in characteristic zero with $q$
now
a $l$th, $pl$th, $p^2l$th etc, root of unity. This generalisation of
the
``classical'' result is not so straight forward. The proof works the
same way, in that we need a more general ring structure on the tilting
module which can then be specialised to different rings. 
Now \cite[Section 5.3]{ander97}
shows that a mixed tilting module lifts to the local
ring $\Z[v,v^{-1}]_{\mathfrak{m}}$ with $\mathfrak{m}$ being the kernel
of the specialisation to an
$l$th root in a characteristic $p$ field. Hence the character of a mixed
tilting module is a sum of tilting characters over any field which is
an algebra over this local ring. This includes for instance the complex
numbers made into such an algebra by specialising $v$ to a primitive
$l$th root of $1$. To get the character result we need $q$ to be a
$p^r l$th root of unity.
%So we need the arithmetic with
%cyclotomic polynomials specialised to characteristic $p$ works out.
%The "arithmetic" is easier than I thought: At least I convinced myself
Now if $n = p^r l$ then the cyclotomic polynomial $\Phi_n$ specialises to
$0$ when $v$ is specialised to a primitive $l$th root of $1$ in a
characteristic $p$ field. This shows that the complex numbers are in fact
an algebra over the relevant local ring. Hence the method indicated
does in fact generalise the result of \cite[Proposition 4.1(ii)]{jensen} to the mixed quantum
case.

Equipped with these results and all the usual translation theory
etc. for $q$-$\GL_3(k)$ we may now obtain the analogous ``pictures'' of
the tilting modules. We need to be a bit careful --- really we must
work with the analogue of $\GL_3(k)$ and not $\SL_3(k)$. Of course,
the connection between $\GL_3(k)$ and $\SL_3(k)$ is in this case
really a matter of tensoring with the appropriate power of the
determinant module. We could equally well work with the weights for
$\GL_3(k)$ and apply translation functors in exactly the same way as
for $\SL_3(k)$.  In other words, we can be confident that the quantum
analogue of the decomposition numbers in the tables in section
\ref{sect:gln}
are also true with $p^2$ replaced by $pl$ and $p$ replaced by $l$
where $l$ is at least $3$ and $p$ is at least $5$.
These decomposition numbers are thus also decomposition numbers for
the corresponding Hecke algebra using the Schur functor.

%Our field $k$ remains algebraically closed but $k$
%may now also have zero as well as positive characteristic.
%Background information can be found in~\cite{donkbk}.
%The cohomological theory of quantum groups and their $q$-Schur
%algebras appears in~\cite{donkquant}.
%When $q=1$ then the module category for $q$-$\GL_n$ is the same as
%for $\GL_n$.
%If $q$ is not a root of unity then $\Mod(q$-$\GL_n)$ is semi-simple.
%We will consider the case where $q$ is a primitive $l$th root of unity with 
%$l \ge 2$.

\section{acknowledgments}
I am very grateful to Andrew Mathas and J\"urgen M\"uller for their
help
with Gap3 and Gap4  respectively and for providing various
decomposition matrices.
I am also very grateful to Henning Andersen for his remarks regarding
the characters of tilting modules for the mixed quantum group
restricting to characters of tilting modules for the quantum group in
characteristic zero.

%\bibliographystyle{amsplain}
%%%\bibliographystyle{abbrv}
%\bibliography{../references}

\providecommand{\bysame}{\leavevmode\hbox to3em{\hrulefill}\thinspace}
\providecommand{\MR}{\relax\ifhmode\unskip\space\fi MR }
% \MRhref is called by the amsart/book/proc definition of \MR.
\providecommand{\MRhref}[2]{%
  \href{http://www.ams.org/mathscinet-getitem?mr=#1}{#2}
}
\providecommand{\href}[2]{#2}

\end{document}